\newcommand{\bA}{\mathbb{A}}
\newcommand{\bL}{\mathbb{L}}
\newcommand{\bV}{\mathbb{V}}
\def\ddefloop#1{\ifx\ddefloop#1\else\ddef{#1}\expandafter\ddefloop\fi}
\def\ddef#1{\expandafter\def\csname c#1\endcsname{\ensuremath{\mathcal{#1}}}}
\newcommand{\rC}{\mathrm{C}}
\newcommand{\red}{\mathrm{red}}
\newcommand{\qcoh}{\mathrm{D}_\mathrm{qcoh}}
\newcommand{\coh}{\mathrm{D}_\mathrm{coh}}
\DeclareMathOperator{\sym}{Sym}
\DeclareMathOperator{\ext}{Ext}
\DeclareMathOperator{\Char}{char}
\newcommand{\at}{\mathrm{At}}
\newcommand{\spec}{\mathrm{Spec}}
\newcommand{\id}{\mathrm{id}}
\newcommand{\Hom}{\mathop{\mathcal{H}om}\nolimits}
\newcommand{\tr}{\mathop{\rm tr}\nolimits}
\newcommand{\grpd}{\mathbf{Grpd}}
\newcommand{\sets}{\mathbf{Sets}}
\newcommand{\sch}{\mathbf{Sch}_k}
\newcommand{\dm}{\mathbf{DM}_k}
\newcommand{\pd}[1]{h^1/h^0(#1^\vee)}
\theoremstyle{plain}
\newtheorem{thm}{Theorem}[section]
\newtheorem{prop}[thm]{Proposition}
\newtheorem{lem}[thm]{Lemma}
\newtheorem{lemma}[thm]{Lemma}
\newtheorem{cor}[thm]{Corollary}
\theoremstyle{remark}
\newtheorem{rmk}[thm]{Remark}
\title{Simple obstructions and cone reduction}
\begin{document}

\begin{abstract}
Let $X$ be a Deligne-Mumford stack locally of finite type over an algebraically closed 
field $k$ of characteristic zero. We show that the
intrinsic normal cone $C_X$ of $X$ is supported in the subcone  $\bV(\Omega_X[-1])$ 
($h^1/h^0((\Omega^1_X)^\vee)$) of its intrinsic normal sheaf $N_X$. This leads to an alternative proof 
of cone reduction by cosections for $C_X$.
 We also discuss vanishing of simple obstructions
under the Buchweitz-Flenner semiregularity map for sheaves.
\end{abstract}

\subjclass{14C17,14N35}

\author{F. Qu}
\address{Northeast Normal University, School of Mathematics and Statistics, Changchun, Jilin, China}
\email{quf996@nenu.edu.cn}

\maketitle

\section{Introduction}
For a closed immersion between schemes  $f\colon X \to Y$, we have 
the normal sheaf $N_f$ and normal cone $C_f$ defined by
\[
N_f=\spec_{\cO_Y}\sym I/I^2,  C_f=\spec_{\cO_Y}\oplus_{d\ge 0} I^d/I^{d+1},
\] where $I$ denotes the ideal sheaf of $X$ in $Y$.
As functors of closed immersions between schemes locally of finite type over a base field $k$, the functors
$N$ and $C$ are extended to morphisms between algebraic stacks (\cite{Kre1, Kre2, KKP,AP}), and are
involved in the construction of virtual fundamental classes \cite{LT, BF}. See \cite{BH} and the references therein for extensions beyond higher stacks.

For a DM stack $X$ locally of finite type over $k$, the normal sheaf and normal cone of the
map $X \to \spec k$ are the intrinsic normal sheaf and intrinsic normal cone $N_X$ and $C_X$ introduced in
\cite{BF}. And it is proved that they carry information about deformations of maps from $\spec k$ to $X$.
More precisely, for any $k$-point 
\[
\xi \colon \spec k \to X,
\] the coarse moduli space of the pullback of $N_X$
along $\xi$ is $\ext^1_k(\xi^*\bL_X, k)$,  where $\bL_X$ denotes the cotangent complex of $X$.
The vector space 
$\ext^1_k(\xi^*\bL_X, k)$ is the universal obstruction space for $\xi$ (\cite[Lemma 4.6]{BF}),
and its subspace of  
curvilinear obstructions is 
the coarse moduli space of the pullback of $C_X$
along $\xi$ (\cite[Lemma 4.7]{BF}). Globally, universality of $N_X$ builds into the definition of an obstruction theory for $X$ and it is equivalent to a closed embedding of
$N_X$ into a cone stack (\cite[Theorem 4.5]{BF}), while curvilinearity of $C_X$ 
lacks transparent description.

When $\Char k=0$, curvilinear extensions are {\em simple extensions}.(See Section \ref{simex}.)
The key observation in this paper is that, {\em simple obstructions}, i.e., obstructions for simple extensions,
are contained in the subspace 
\[
\ext^1_k(\mathbf{L} \xi^*\Omega^1_X, k)
\]
of $\ext^1(\xi^*\bL_X, k)$, and we are led to consider the subcone 
\[
\bV(\Omega_X^1[-1]),\text{or} \  h^1/h^0((\Omega^1_X)^\vee), 
\]
of $N_X$.(See Section \ref{V} for notation.) 
If $k$ is also algebraically closed, then we show $C_X$ is supported in $\bV(\Omega_X^1[-1])$
 (Theorem \ref{abs}). It is proved using
Lemma \ref{loc} which deduce global information about support of closed substacks from their pointwise coarse sheaves. Here the support of a closed substack $C$ is the reduced substack $C_\red$ associated to it.
 
 Cone reduction for a subcone of $N_X$ concerns its support. 
It is easy to see cone reduction by cosections for $\bV(\Omega_X^1[-1])$.
A cosection 
\[
\sigma\colon N_X \to A^1_k
\]
is a map between cones and is induced by a map 
$\cO_X[1] \to \bL_X^{\ge -1}$
from $\cO_X[1]$ to the truncated cotangent complex of $X$ 
 in $\qcoh(X)$.
Since there is no nonzero map from $\cO_X[1]$ to $\Omega^1_X$[0], we conclude
$\bV(\Omega_X^1[-1])$ is supported in the kernel $\sigma^{-1}\{0\}$. This observation improves  
\cite[Corollary 4.5]{KL}. Note that we consider maps $\cO_X[1] \to \bL_X^{\ge -1}$ without
additional obstruction theories for $X$.
More generally, for any $\cG[1] \to \bL_X^{\ge -1}$, where $\cG$ is a coherent sheaf on $X$,  the 
obvious vanishing 
of $\cG[1] \to \Omega_X^1[0]$ can be viewed as a global version of \cite[Proposition 6.13(1)]{BuF}.
Semiregularity can also be treated globally from this perspective as in \cite[Section 5]{Cha} and \cite[Section 3]{Sch}.

We also deduce a relative version.
For a map  $f\colon X \to Y$ between DM stacks locally of finite type over $k$, and  $Y$ smooth, 
we look for subcones of $N_f$ that contain $C_f$.
The cone $\bV(\Omega_f[-1])$ is too small for closed immersions.
As $C_X$ is given by $[C_f/f^*T_Y]$, it follows from Theorem \ref{abs} that 
$C_f$ is supported in \[
\bV(\{f^*{\Omega_Y^1}\to \Omega_X^1\}[-1])\]
(Proposition \ref{rel}) . 
Here
$
\{f^*\Omega_Y^1 \to \Omega_X^1\}
$ is the two-term complex with $f^*\Omega_Y^1$ in degree $-1$ and $\Omega_X^1$ degree 0.

Cone reduction by cosections can also be treated in the framework of derived algebraic geometry and
derived differential geometry. See
\cite[Section 6]{AKLPR}, \cite[Appendix A.1]{BKP}, and \cite[Proposition 5.1]{Sa}.

Recent interests in cone reduction come from the work of 
Oh-Thomas (\cite{OT}).
In their approach to construct algebraic virtual classes on moduli spaces of sheaves on complex Calabi-Yau 4 folds, it is of interest to show
 a normal cone inside a quadratic bundle is isotropic or supported in some maximal isotropic subbundle.
 The isotropic condition in \cite{OT} is established using
the Darboux theorem (\cite{BBJ}) for shifted symplectic structures (\cite{PTVV}).
 The local counterparts to these global questions
 concern the subspace of curvilinear obstructions 
inside the standard
 obstruction space with its quadratic form induced by Serre Duality, and imply global results for the reduced normal cone. However, these local questions are not as trivial to establish as those in this paper and seems out of reach for us.

The paper is organized as follows.
Preliminary results are collected in Section 2, from which the
the main results in Section 3 follow rather easily. In Section 4, we globalize
the semiregularity map(\cite{BuF}) for sheaves 
to a map in the derived category, and show vanishing of obstructions for simple extensions.

\subsection*{Notation and convention}
We work over a base field $k$.
Denote $\sets$ the category of sets, and $\grpd$ the category of groupoids.
Denote $\dm$ the category of Deligne-Mumford stacks locally of finite type over $k$,
and $\sch$ its full subcategory of schemes in $\dm$.
For $X\in \dm$ and  a ring $R$ of finite type over $k$, $X(R)$ denotes the set of morphisms $\spec R \to X$ in $\dm$.
Functors for sheaves and complexes are derived. For sheaves, we will emphasis left or right derived functor 
by adding $\mathbf{L}$ or $\mathbf{R}$.

\section{Preliminaries}

\subsection{Coarse sheaf and support}

\subsubsection{$\pi_0$}
Let $\pi_0\colon \grpd \to \sets$ be the functor that sends a groupoid to its isomorphic classes of objects.
It is the
left adjoint of the inclusion $\sets \to \grpd$.

We view $\sch$ as a site with the big \'etale topology.
For a stack 
\[
Y\colon \sch \to \grpd,
\] denote $\pi_0(Y)
$ the sheafification of the 
composition 
\[
\pi_0\circ Y\colon \sch \to \sets.
\]
Clearly, $\pi_0(Y)$ is functorial in $Y$ and there is a natural transformation
\[
Y \to \pi_0(Y).
\]

Recall a map $ M \to N$ between stacks  over $\sch$ is fully faithful if the relative diagonal 
$M \to M\times_N M$ is an isomorphism. Equivalently, for any $T \in \sch$,
$M(T) \to N(T)$ is fully faithful in $\grpd$.
\begin{lem}\label{car}
Let $M \to N$ be a fully faithful map of stacks, then
\[
\begin{tikzcd}
M \ar[r]\ar[d] & N \ar[d]\\
\pi_0(M) \ar[r] & \pi_0(N)
\end{tikzcd}
\] is a cartesian diagram in the $(2,1)$-category of stacks over $\sch$.
\end{lem}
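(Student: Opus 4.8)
The plan is to produce a canonical morphism $\Phi\colon M\to P$, where $P:=N\times_{\pi_0(N)}\pi_0(M)$ is the fiber product formed in the $(2,1)$-category of stacks over $\sch$, and then to prove that $\Phi$ is an equivalence of stacks; this is exactly what it means for the square to be cartesian. The morphism $\Phi$ exists by the universal property of $P$: by functoriality of $\pi_0$ the two composites $M\to N\to\pi_0(N)$ and $M\to\pi_0(M)\to\pi_0(N)$ are canonically $2$-isomorphic, both being the natural map $M\to\pi_0(N)$.

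First I would check that $\Phi$ is fully faithful, i.e.\ that $M(T)\to P(T)$ is fully faithful for every $T\in\sch$. Since $\pi_0(N)$ and $\pi_0(M)$ are sheaves of sets, they are discrete as stacks, so an object of $P(T)$ is just a pair $(n,s)$ with $n\in N(T)$ and $s\in\pi_0(M)(T)$ having the same image in $\pi_0(N)(T)$ (the comparison isomorphism being unique when it exists), and a morphism $(n,s)\to(n',s')$ in $P(T)$ is just a morphism $n\to n'$ in $N(T)$, forcing $s=s'$. Writing $n,n'$ for the images in $N(T)$ of objects $m,m'$ of $M(T)$, we thus get $\mathrm{Hom}_{P(T)}(\Phi m,\Phi m')=\mathrm{Hom}_{N(T)}(n,n')$ when $[m]=[m']$ in $\pi_0(M)(T)$, and $\emptyset$ otherwise; but in the latter case $m$ and $m'$ are already non-isomorphic in $M(T)$, hence by hypothesis $n$ and $n'$ are non-isomorphic in $N(T)$ and both $\mathrm{Hom}$-sets are empty. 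So in all cases the fully faithfulness of $M(T)\to N(T)$ makes $\mathrm{Hom}_{M(T)}(m,m')\to\mathrm{Hom}_{P(T)}(\Phi m,\Phi m')$ a bijection.

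It then remains to see that $\Phi$ is essentially surjective in the stacky sense, namely that every object of $P(T)$ lies, \'etale-locally on $T$, in the essential image of $\Phi$; given full faithfulness and the stack condition on $M$, this suffices to produce a quasi-inverse. So fix $(n,s)\in P(T)$. As $\pi_0(M)$ is the sheafification of $T\mapsto\pi_0(M(T))$, there is an \'etale cover $T_i\to T$ with $s|_{T_i}=[m_i]$ for some $m_i\in M(T_i)$; let $n_i\in N(T_i)$ be the image of $m_i$. Then $[n_i]=[n|_{T_i}]$ in $\pi_0(N)(T_i)$, and since $\pi_0(N)$ is likewise a sheafification, after a further \'etale cover $T_{ij}\to T_i$ this equality is witnessed by an isomorphism $n_i|_{T_{ij}}\xrightarrow{\sim}n|_{T_{ij}}$ in $N(T_{ij})$. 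Paired with the identity on the $\pi_0(M)$-component, this is an isomorphism $\Phi(m_i|_{T_{ij}})\xrightarrow{\sim}(n,s)|_{T_{ij}}$ in $P(T_{ij})$, which is the required local lift. Hence $\Phi$ is an equivalence and the square is cartesian.

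The argument is essentially bookkeeping; the only point to keep straight — and the closest thing to an obstacle — is that the objects of the $(2,1)$-categorical fiber product are a priori triples (an object of $N$, an object of $\pi_0(M)$, and an isomorphism between their images in $\pi_0(N)$), collapsing to pairs only because $\pi_0(N)$ is discrete, and that working in the $(2,1)$-category of \emph{stacks} rather than prestacks is precisely what allows one to pass to \'etale covers to undo the two sheafifications hidden in the definition of $\pi_0$.
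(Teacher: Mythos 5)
Your proof is correct. It does, however, organize the argument differently from the paper. The paper first notes the purely groupoid-theoretic fact that for a fully faithful functor $G \to H$ the square formed with $\pi_0(G) \to \pi_0(H)$ is homotopy cartesian in $\grpd$; this gives the cartesian property objectwise for the presheaf-level square involving $\pi_0\circ M$ and $\pi_0\circ N$, and the lemma then follows by stackification, using that stackification preserves finite (homotopy) limits. You instead work directly with the sheafified $\pi_0$'s: you build the comparison map $\Phi\colon M \to N\times_{\pi_0(N)}\pi_0(M)$, check full faithfulness by hand (your observation that the fiber product collapses to pairs because $\pi_0(N)$ and $\pi_0(M)$ are discrete is exactly the same computation the paper performs at the groupoid level), and then handle the sheafification explicitly by passing to \'etale covers to lift sections of $\pi_0(M)$ and to witness equalities in $\pi_0(N)$ by actual isomorphisms in $N$, concluding via the standard fact that a fully faithful, \'etale-locally essentially surjective morphism of stacks is an equivalence (this last step also implicitly uses that the fiber product is a stack, so that the local isomorphisms to $(n,s)$ glue, a point worth stating). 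The trade-off: the paper's route is shorter and hides all sheafification bookkeeping in the left-exactness of stackification, while yours is more self-contained and makes the local-lifting mechanism visible, at the cost of invoking the fully-faithful-plus-locally-surjective descent lemma. Both are complete proofs.
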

\begin{proof}
If
\[
G \to H
\] is a fully faithful map in $\grpd$, 
then
\[
\begin{tikzcd}
G \ar[r]\ar[d] & H \ar[d]\\
\pi_0(G) \ar[r] & \pi_0(H)
\end{tikzcd}
\] is (homotopy) cartesian in $\grpd$.
It follows that 
\[
\begin{tikzcd}
M \ar[r]\ar[d] & N \ar[d]\\
\pi_0\circ M \ar[r] & \pi_0\circ N
\end{tikzcd}
\] is cartesian, and we see the lemma holds by taking stackification.
\end{proof}

\begin{rmk}
If $g\colon Z \to Y$ is a closed immersions between algebraic stacks, then $g$ is fully faithful.
From the lemma we see that $\pi_0(Z) \to \pi_0(Y)$ is representable by closed immersions.
In particular, $\pi_0(Z) \to \pi_0(Y)$ is fully faithful, and we view $\pi_0(Z)$ as a subsheaf of $\pi_0(Y)$.
\end{rmk}

\begin{rmk}
For the closed immersion $C_X \to N_X$,  the lemma is proved during the proof of \cite[Proposition 2.2]{Be}.
\end{rmk}

\subsubsection{Support of closed substacks}

Let $N$ be an algebraic  stack over $X \in \dm$, and $C,D$ closed substacks of $N$.
For any point $f\colon T \to X $ in $\dm$, we have subsheaves
$\pi_0(f^*C)$ and $\pi_0(f^*D)$ of $\pi_0(f^*N)$. Here $f^*C, f^*D$ are the fiber products $T\times_X C, T\times_XD$.

The following lemma is a variation on \cite[Proposition 2.1]{CL}.
\begin{lemma}\label{loc}
Let $k$ be an algebraic closed field. If for any $k$- point 
\[
\xi\colon \spec k \to X,
\] the sheaf $\pi_0(\xi^*C)$ is contained in $\pi_0(\xi^*D)$ as subsheaves of $\pi_0(\xi^*N)$,
then $C$ is supported in $D$. In other words, the inclusion map $C_\red \to N$ factors through the inclusion $D \to N$.
Here $C_\red$ denotes the reduced stack associated to $C$.
\end{lemma}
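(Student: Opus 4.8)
The plan is to reduce the asserted factorization to an inclusion of underlying closed subsets $|C| \subseteq |D|$ inside $|N|$, and then to verify that inclusion one $k$-point at a time, using Lemma~\ref{car} to convert the hypothesis on coarse sheaves into honest lifts.

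\emph{First reduction.} Since $C_\red$ is reduced, $C_\red \to N$ factors through the closed immersion $D \to N$ if and only if it factors through $D_\red \to N$; and checking this on a smooth atlas — where it amounts to the statement that a section of the structure sheaf of a reduced scheme that vanishes at every point is zero — the condition is equivalent to $|C| \subseteq |D|$ in $|N|$. Now use that $C$ is locally of finite type over the algebraically closed field $k$, which is the case in the situations of interest, e.g.\ $C = C_X \subseteq N = N_X$, since $N_X$ is itself locally of finite type over $k$. Then the closed points of $C$ coincide with its $k$-points and are dense in $|C|$, and $|D|$ is closed, so it suffices to prove: every $k$-point $x \colon \spec k \to N$ that factors through $C$ also factors through $D$.

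\emph{The $k$-point argument.} Fix such an $x$, let $\xi \colon \spec k \to X$ be its image under the structure map $N \to X$, and form $\xi^* C \hookrightarrow \xi^* N$ and $\xi^* D \hookrightarrow \xi^* N$. The given factorization of $x$ through $C$, together with $\mathrm{id}_{\spec k}$, determines a $k$-point $x'$ of $\xi^*C = \spec k \times_X C$, and composing with $\xi^*C \hookrightarrow \xi^*N$ gives a $k$-point $x''$ of $\xi^*N$ lying over $x$. By naturality of $Y \mapsto \pi_0(Y)$, the class of $x'$ in $\pi_0(\xi^*C)(\spec k)$ and the class of $x''$ in $\pi_0(\xi^*N)(\spec k)$ have the same image in $\pi_0(\xi^*N)(\spec k)$; by hypothesis the former already lies in $\pi_0(\xi^*D)(\spec k)$. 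Applying Lemma~\ref{car} to the closed, hence fully faithful, immersion $\xi^*D \to \xi^*N$, the square relating $\xi^*D$, $\xi^*N$, $\pi_0(\xi^*D)$, $\pi_0(\xi^*N)$ is cartesian, so the compatible pair consisting of $x''$ together with the section of $\pi_0(\xi^*D)$ just exhibited defines a $k$-point of $\xi^*D$. Its image under $\xi^*D \to D \to N$ is $x$, so $x$ factors through $D$, as required.

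The step I expect to be the main obstacle is the bookkeeping in this last paragraph: $\pi_0$ is a \emph{sheafified} functor, so a $k$-point of $\xi^*C$ produces a priori only a section of the sheaf $\pi_0(\xi^*C)$, not a point of any moduli space, and one must carefully track the commutativity of the squares relating the various pullbacks and their $\pi_0$'s (here strictly commutative, as the $\pi_0$'s are sheaves of sets). Lemma~\ref{car} is exactly the device that turns the containment $\pi_0(\xi^*C) \subseteq \pi_0(\xi^*D)$ back into an actual lift $\spec k \to \xi^*D$; without it one only gets set-theoretic information. The remaining ingredients — preservation of reducedness under smooth base change, density of $k$-points for stacks locally of finite type over an algebraically closed field, and the étale-local nature of factoring through a closed immersion — are routine.
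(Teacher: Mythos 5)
Your proof is correct and follows essentially the same route as the paper: reduce via density of closed points (for stacks locally of finite type over the algebraically closed $k$) to checking that $k$-points of $C$ factor through $D$, then use Lemma~\ref{car} to convert the hypothesis on $\pi_0$ into an actual lift. The only cosmetic difference is that you lift individual $k$-points using the cartesian square for $D$ alone, whereas the paper uses the squares for both $C$ and $D$ to conclude $\xi^*C \subseteq \xi^*D$ as closed subschemes of $\xi^*N$.
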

\begin{proof}
By passing to smooth covers, we can assume $N$ and $X$ are in $\sch$.
For schemes in $\sch$, closed points are dense, and $C$ is supported in $D$
if
closed points of $C$ belongs to those of $D$.
As $k$ is algebraically closed, closed points are $k$-points, and we only need to show
$C(k) \subset D(k)$.

Any $k$-point of $C$ lies over some closed point $\xi\colon \spec k \to X$.
By Lemma \ref{car}, we have cartesian diagrams
\[
\begin{tikzcd}
\xi^*C\ar[r]\ar[d]  &  \xi^*N\ar[d]\\
\pi_0(\xi^*C) \ar[r] & \pi_0(\xi^*N),
\end{tikzcd}
\begin{tikzcd}
\xi^*D\ar[r]\ar[d]  &  \xi^*N\ar[d]\\
\pi_0(\xi^*D) \ar[r] & \pi_0(\xi^*N).
\end{tikzcd}
\]
From the assumption $\pi_0(\xi^*C) \subset \pi_0(\xi^*D)$,
we see $\xi^*C \subset \xi^*D$ as closed subschemes of $\xi^*N$, and
closed points of $C$ over $\xi$ belong to $D(k)$.

\end{proof}
 \subsection{The functor $\pd{(-)}$} \label{V}
 Let $X \in \dm$. Associated to $E\in \coh^{\le 0}(X)$ is the cone stack 
 \[
 \pd{E}.
 \](See \cite[Proposition 2.4]{BF}.)
If $\tau^{[-1,0]}(E)$ is quasi-isomorphic to a two-term complex of coherent sheaves 
\[
\cQ \to \cF
\] with $\cF$ locally free, then $h^1/h^0(E^\vee)$ is the quotient stack
\[
[\rC(\cQ)/\rC(\cF)],
\] where $\rC(-)=\spec_{\cO_X}\sym(-)$.
The map $\cQ\to \cF$ induces a map between abelian cones $\rC(\cF) \to \rC(\cQ)$,
and as a morphism of abelian group schemes over $X$, we have a Picard stack $[\rC(\cQ)/\rC(\cF)]$.
It can also be described as the stack which associates to
a $T$-point $f\colon T \to X$ the mapping space between $f^*(E[-1])$ and $\cO_T$, and 
is denoted $\bV(E[-1])$ in \cite{AP}. 
We will also use the $\bV$ notation.

\begin{rmk}
The $\bV$ construction using mapping spaces is more general than $h^1/h^0$. For a 3-term complex
$F$
in degree $[0,2]$ on an algebraic stack, $\bV(F)$ is in general
a Picard 2-stack. The $\bV$ notation depends on the convention of
authors, it is either $\bV(E)$ or $\bV(E^\vee)$.
\end{rmk}

When $E$ is perfect, $\pi_0(\pd{E})$ is give by $h^1(E^\vee)$. Here $h^1(E^\vee)$
denotes its extension to the big \'etale site. (See e.g., \cite[Section 2, p.~60]{BF} for extensions from
 a small site to a big site.)

From its description, it is clear the pullback of $\pd{E}$
 along $g\colon Y \to X$ is $h^1/h^0((g^*E)^\vee)$.
In particular, for any point $\xi\in X(k)$, the pullback is $\pd{(\xi^*E)}$.
As $\tau^{[-1,0]}(\xi^*E)$ is quasi-isomorphic to $h^{-1}(\xi^*E)[1]\oplus h^0(\xi^*E)$
in the derived category of $k$-modules, it is easy to see
$\pi_0(h^1/h^0((g^*E)^\vee))$ is isomorphic to $\rC(h^{-1}(\xi^*E))$, or the $k$-vector space
$\ext^1_k(\xi^*E, k)$.

\subsection{Simple extensions and obstructions} \label{simex}
A closed immersion $S \to S'$ between schemes with ideal $I$ is 
a square zero extension if $I^2=0$, in which case the ideal sheaf $I$
has an induced $\cO_S$-module structure.
In the affine case, $S \to S'$ corresponds to a surjection of commutative rings $A' \to A$ with
square zero kernel.

Isomorphic classes of square zero extensions of $S$ by an $\cO_S$-module $J$
can be identified with 
$
\ext^1_S(\bL_S, J)
$(\cite[III.1.2.3]{IL}).
For a square zero extension $S \to S'$, the element it determines in
$\ext^1_S(\bL_S, J)$
is the map
\[
\bL_S \to \bL_{S/S'} \to \tau^{\ge-1}\bL_{S/S'}\simeq J[1].
\]

The natural map $\bL_S \to \Omega_S^1$ induces an inclusion
\[
\ext^1_S(\Omega_S^1, J) \to \ext^1_S(\bL_S,J).
\]
Elements of the subspace
$
\ext^1_S(\Omega_S^1, J)
$ correspond to simple extensions.
Alternatively, a square zero extension $S \to S'$ is simple, if
\[
I/I^2 \to {\Omega_{S'}^1}_{|_S} \to \Omega_S\to 0
\] is exact on the left. (See e.g., \cite[Theorem 9.2 (2)]{BuF}.)

It is easy to see curvilinear extensions
\[
k \to k[t]/(t^{n+1}) \to k[t]/(t^n), n\ge 2
\] are simple when $\Char k=0$.

Let $S \to S'$ be a square zero extension in $\sch$
classified by some map $e\colon \bL_S \to J[1]$,
 and $f\colon S \to X$ a map  in $\dm$.
The obstruction to lift $f$ from $S$ to $S'$ 
lies in $\ext^1_S(f^*\bL_X, J)$, and
is given by
the map 
\begin{equation} \label{obsc}
f^*\bL_X \to \bL_S \xrightarrow{e} J[1].
\end{equation}(\cite[III.2.2.4]{IL})

\begin{lemma} 
Let $e$ be a simple extension of $S$ by $J$,  then the obstruction 
to lift $f\colon S \to X$ along $e$ to $S'$
lies in the subspace
\[
\ext^1_S(\mathbf{L}f^*\Omega^1_X, J)
\] of $\ext^1_S(f^*\bL_X, J)$.
\end{lemma}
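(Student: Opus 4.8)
The plan is to chase the obstruction map \eqref{obsc} through the factorization that defines simple extensions. Recall that for a simple extension $e \colon \bL_S \to J[1]$, by definition $e$ factors through the natural map $\bL_S \to \Omega^1_S$, say as $\bL_S \to \Omega^1_S \xrightarrow{\bar e} J[1]$. So the obstruction class $f^*\bL_X \to \bL_S \xrightarrow{e} J[1]$ can be rewritten as the composite
\[
f^*\bL_X \to \bL_S \to \Omega^1_S \xrightarrow{\bar e} J[1].
\]
The key observation is that the composite $f^*\bL_X \to \bL_S \to \Omega^1_S$ factors through $\mathbf{L}f^*\Omega^1_X$. Indeed, the canonical map $\bL_X \to \Omega^1_X$ pulls back to $f^*\bL_X \to \mathbf{L}f^*\Omega^1_X$, and naturality of the truncation/abelianization maps gives a commutative square relating $f^*\bL_X \to \bL_S \to \Omega^1_S$ with $f^*\bL_X \to \mathbf{L}f^*\Omega^1_X \to \Omega^1_S$, the bottom horizontal arrow being obtained from $f \colon S \to X$ applied to Kähler differentials (i.e.\ $\mathbf{L}f^*\Omega^1_X \to \Omega^1_{S/X}$ composed with... or more directly, $f^\#\colon \mathbf{L}f^*\Omega^1_X \to \Omega^1_S$ is the standard map of differentials).

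Granting that factorization, the obstruction class becomes the image of a class in $\ext^1_S(\mathbf{L}f^*\Omega^1_X, J)$ — namely the class of $\mathbf{L}f^*\Omega^1_X \to \Omega^1_S \xrightarrow{\bar e} J[1]$ — under the map $\ext^1_S(\mathbf{L}f^*\Omega^1_X, J) \to \ext^1_S(f^*\bL_X, J)$ induced by precomposition with $f^*\bL_X \to \mathbf{L}f^*\Omega^1_X$. So the steps are: (1) invoke the definition of simple extension to factor $e$ through $\bL_S \to \Omega^1_S$; (2) establish the commutative diagram
\[
\begin{tikzcd}
f^*\bL_X \ar[r]\ar[d] & \mathbf{L}f^*\Omega^1_X \ar[d]\\
\bL_S \ar[r] & \Omega^1_S
\end{tikzcd}
\]
relating the abelianization maps on $X$ and on $S$ via $f$; (3) concatenate with $\bar e$ and read off that the obstruction lies in the image of $\ext^1_S(\mathbf{L}f^*\Omega^1_X, J)$.

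The main obstacle is verifying step (2), the commutativity of that square — this is really a statement about compatibility of the cotangent-complex-to-Kähler-differentials natural transformation with pullback along $f$. One clean way is to use that $\Omega^1_{(-)}$ is $h^0$ (the zeroth cohomology truncation, up to the standard identification $\tau_{\ge 0}\bL \simeq \Omega^1[0]$ for the absolute cotangent complex, or rather $H^0(\bL)=\Omega^1$), and that $\mathbf{L}f^*$ is right-exact so commutes with $\tau_{\ge 0}$ / $H^0$ in the relevant sense; then the square is just functoriality of $H^0$ applied to the map $f^*\bL_X \to \bL_S$ together with the base-change comparison $\mathbf{L}f^* H^0(\bL_X) \to H^0(\mathbf{L}f^*\bL_X) \to H^0(\bL_S)$. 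Alternatively one can argue via the transitivity triangle $f^*\bL_X \to \bL_S \to \bL_{S/X}$ and the fact that $\Omega^1_{S/X} = \mathrm{coker}(f^*\Omega^1_X \to \Omega^1_S)$, which makes the square manifestly commute on $H^0$. I expect step (1) to be immediate from the definitions recalled in Section \ref{simex}, and step (3) to be a formality of the $\ext$-functor; the genuine content is packaging step (2) correctly, but it is standard cotangent-complex yoga and should not present real difficulty.
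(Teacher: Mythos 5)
Your argument is correct and is essentially the paper's own proof: the paper's one-line justification is precisely that $f^*\bL_X \to \bL_S \to \Omega_S^1[0]$ factors through $\mathbf{L}f^*\Omega^1_X$, which is your step (2), combined with the factorization of a simple extension $e$ through $\Omega^1_S$ from Section \ref{simex}. Your expanded justification of the commutative square via $H^0$ and the compatibility of $\mathbf{L}f^*$ with truncation is the standard way to verify that factorization, so there is nothing to change.
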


\begin{proof}
The lemma holds because the map $f^*\bL_X \to \bL_S \to \Omega_S^1[0]$ factors through
$\mathbf{L}f^*\Omega^1_X$.
\end{proof}
\begin{rmk}\label{sim}
Let $\xi\colon \spec k \to X$ be a $k$-point of $X$.
Let $A$ be an Artinian $k$-algebra with residue field $k$.
and $f\colon \spec A \to X$ a deformation of $\xi$.
Apply the lemma to $S=\spec A$ and $J$ the residue field of $A$, 
we see that the obstruction class belongs to
\[
\ext^1_A(\mathbf{L}f^*\Omega^1_X, k)\simeq \ext^1_k(\mathbf{L}\xi^*\Omega^1_X,k).
\] In particular, curvilinear obstructions associated to $\xi$ lie in $\ext^1_k(\mathbf{L}\xi^*\Omega^1_X,k)$.
\end{rmk}

\section{Cone reduction for $C_X$}
In this section, we assume the field $k$ has $\Char k=0$ and is algebraically closed.

For a map $f\colon X \to Y$ in $\dm$ with $Y$ smooth, 
we have the map between
sheaves of K\"ahler differentials $f^*{\Omega_Y^1} \to \Omega_X^1$, and its cone
\[
\{f^*{\Omega_Y^1} \to \Omega_X^1\}
\] in $\qcoh(X)$.

\begin{lem}
 Let $f\colon X \to Y$ be a map in $\dm$ with $Y$ smooth. Then
\[
\bV(\{f^*{\Omega_Y^1} \to \Omega_X^1\}[-1])
\] is a closed subcone of
$N_f$. 
In particular, for any $X \in \dm$, $\bV(\Omega_X^1[-1])$
is a closed subcone of $N_X$.
\end{lem}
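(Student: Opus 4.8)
The plan is to realize $\bV(\{f^*\Omega_Y^1\to\Omega_X^1\}[-1])$ as the fiber, over the zero section, of a natural morphism from $N_f=\bV(\bL_{X/Y}[-1])$ to an abelian cone over $X$; since the zero section of an abelian cone is a closed immersion, its base change is again one. The morphism and its target arise from a distinguished triangle
\[
\tau^{\le-1}\bL_X\longrightarrow\bL_{X/Y}\xrightarrow{\ \phi\ }\{f^*\Omega_Y^1\to\Omega_X^1\}\xrightarrow{\ +1\ }
\]
in $\qcoh(X)$, which I would obtain via the octahedral axiom. Because $Y$ is smooth, $f^*\bL_Y$ is the locally free sheaf $f^*\Omega_Y^1$ in degree $0$; because $X$ is Deligne--Mumford, $\tau^{\ge0}\bL_X=\Omega_X^1[0]$. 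The composite $f^*\bL_Y\to\bL_X\to\tau^{\ge0}\bL_X$ of the canonical arrow with the truncation is the canonical map $f^*\Omega_Y^1\to\Omega_X^1$, whose cone is $\{f^*\Omega_Y^1\to\Omega_X^1\}$; applying the octahedral axiom to this composite and using $\mathrm{cone}(f^*\bL_Y\to\bL_X)=\bL_{X/Y}$ and $\mathrm{cone}(\bL_X\to\tau^{\ge0}\bL_X)=\tau^{\le-1}\bL_X[1]$ yields, after rotation, the triangle above, with $\phi$ the induced map of cones.

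Next I would apply $\bV((-)[-1])$. By definition this functor sends $E$ to the stack $T\mapsto\mathrm{Map}_{\cO_T}(\mathbf{L} g^{*}E[-1],\cO_T)$, for $g\colon T\to X$. As $\mathbf{L} g^{*}$ and $[-1]$ are exact and $\mathrm{Map}_{\cO_T}(-,\cO_T)$ sends a distinguished triangle $A\to B\to C\to A[1]$ to a fiber sequence $\mathrm{Map}(C,\cO_T)\to\mathrm{Map}(B,\cO_T)\to\mathrm{Map}(A,\cO_T)$, the triangle above is carried to a fiber sequence of Picard stacks over $X$,
\[
\bV(\{f^*\Omega_Y^1\to\Omega_X^1\}[-1])\xrightarrow{\ \bV(\phi[-1])\ }N_f\longrightarrow\bV(\tau^{\le-1}\bL_X[-1]).
\]
Since $\tau^{\le-1}\bL_X[-1]\in\coh^{\le0}(X)$ with $h^0=h^{-1}(\bL_X)$, for every $g\colon T\to X$ the space $\mathrm{Map}_{\cO_T}(\mathbf{L} g^{*}\tau^{\le-1}\bL_X[-1],\cO_T)$ is discrete, equal to $\mathrm{Hom}_{\cO_T}(g^{*}h^{-1}(\bL_X),\cO_T)$; hence $\bV(\tau^{\le-1}\bL_X[-1])$ is the abelian cone $\rC(h^{-1}(\bL_X))=\spec_{\cO_X}\sym h^{-1}(\bL_X)$, affine over $X$, whose zero section is a closed immersion. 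Base-changing this closed immersion along $N_f\to\rC(h^{-1}(\bL_X))$ identifies $\bV(\phi[-1])$ with a closed immersion; as $\bV$ of a morphism of complexes is a morphism of cone stacks, this exhibits $\bV(\{f^*\Omega_Y^1\to\Omega_X^1\}[-1])$ as a closed subcone of $N_f$. Taking $Y=\spec k$ specializes $\{f^*\Omega_Y^1\to\Omega_X^1\}$ to $\Omega_X^1[0]$ and $N_f$ to $N_X$, which is the last assertion.

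As a consistency check, over an \'etale chart $j\colon U\hookrightarrow V$ with $V\to Y$ smooth and $\cI$ the ideal of $U$ in $V$, set $\cQ=\mathrm{im}(\cI/\cI^2\to j^*\Omega_V^1)$; then the inclusion restricts to the closed immersion $[\rC(\cQ)/\rC(j^*\Omega_{V/Y}^1)]\hookrightarrow[\rC(\cI/\cI^2)/\rC(j^*\Omega_{V/Y}^1)]=N_f|_U$ induced by the surjection $\cI/\cI^2\twoheadrightarrow\cQ$. The step I expect to be most delicate is precisely the bookkeeping in the octahedron and with the shifts: one must verify that $\mathrm{cone}(f^*\bL_Y\to\bL_X)$ really is $\bL_{X/Y}$ -- this is where smoothness of $Y$ enters, guaranteeing $f^*\bL_Y$ has no cohomology below degree $0$ -- that the composite through $\tau^{\ge0}\bL_X=\Omega_X^1[0]$ is the intended map $f^*\Omega_Y^1\to\Omega_X^1$, and that it is $\tau^{\le-1}\bL_X[-1]$, rather than some other shift, whose $\bV$ is the $0$-truncated abelian cone $\rC(h^{-1}(\bL_X))$. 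None of this is hard, but the signs and shifts need attention.
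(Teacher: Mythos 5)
Your proof is correct and follows the same route as the paper: both arguments construct the distinguished triangle $\tau^{\le-1}\bL_X \to \bL_f \to \{f^*\Omega_Y^1\to\Omega_X^1\}$ (the paper via a morphism of triangles, you via the octahedron --- same content) and conclude that the induced map on $\bV((-)[-1])$ is a closed immersion because the cone of $\bL_f\to\{f^*\Omega_Y^1\to\Omega_X^1\}$ lies in degrees $\le -2$. The only difference is that where the paper cites \cite[Proposition 2.6]{BF} for this last implication, you reprove it in the case at hand by exhibiting $\bV(\{f^*\Omega_Y^1\to\Omega_X^1\}[-1])$ as the fiber of $N_f$ over the zero section of the affine abelian cone $\rC(h^{-1}(\bL_X))$; that argument is sound.
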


\begin{proof}
As $Y$ is smooth, $\bL_Y\simeq \Omega_Y^1$, and 
$\mathbf{L}f^*{\Omega_Y^1} \simeq f^*\Omega_Y^1$.

The map 
$\bL_X  \to \Omega_X^1[0]=\tau^{\ge 0}\bL_X $
induces a map between distinguished triangles
\[
\begin{tikzcd}
\mathbf{L}f^*{\Omega_Y^1}  \ar[r]\ar[d,"\simeq"]
	& \bL_X \ar[r]\ar[d]
			&\bL_f \ar[d,dashed]\\
f^*{\Omega_Y^1}  \ar[r]
	&\Omega_X^1 \ar[r]
		& \{ f^*{\Omega_Y^1} \to \Omega_X^1 \},
\end{tikzcd}
\]
and we see that the cone of 
\[
\bL_f \to\{ f^*{\Omega_Y^1} \to \Omega_X^1\}
\] is isomorphic to $\tau^{\le -1}\bL_X[1]$ which lies in $\qcoh^{\le -2}(X)$.
 Now we apply 
\cite[Proposition 2.6]{BF} to finish the proof. 
 
 \end{proof}

\begin{thm}\label{abs}
Let $X$ be a DM stack locally of finite type over $k$, 
then $C_X$ is supported in $\bV(\Omega_X^1[-1])$.
\end{thm}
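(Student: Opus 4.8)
The plan is to deduce the statement from Lemma~\ref{loc}, applied with $N=N_X$, $C=C_X$, and $D=\bV(\Omega^1_X[-1])$; by the lemma above the latter is a closed subcone of $N_X$, so it is an admissible choice of $D$. Hence it suffices to verify, for every $k$-point $\xi\colon\spec k\to X$, that the subsheaf $\pi_0(\xi^*C_X)$ of $\pi_0(\xi^*N_X)$ is contained in $\pi_0(\xi^*\bV(\Omega^1_X[-1]))$.

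The next step is to rewrite all three coarse sheaves as $\ext$-groups. Since $N_X=h^1/h^0(\bL_X^\vee)$ and $\bV(\Omega^1_X[-1])=h^1/h^0((\Omega^1_X)^\vee)$, the computation in Section~\ref{V} identifies $\pi_0(\xi^*N_X)$ with $\ext^1_k(\xi^*\bL_X,k)$ and $\pi_0(\xi^*\bV(\Omega^1_X[-1]))$ with $\ext^1_k(\mathbf L\xi^*\Omega^1_X,k)$, the inclusion of the second into the first being the one induced by $\bL_X\to\Omega^1_X[0]$ as in Section~\ref{simex}. On the other hand, by \cite[Lemma 4.7]{BF} the sheaf $\pi_0(\xi^*C_X)$ is the curvilinear obstruction locus inside $\ext^1_k(\xi^*\bL_X,k)$. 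Now the hypothesis $\Char k=0$ is used: curvilinear extensions are simple (Section~\ref{simex}), so every curvilinear obstruction at $\xi$ is a simple obstruction, and by Remark~\ref{sim} it lies in $\ext^1_k(\mathbf L\xi^*\Omega^1_X,k)$. This is precisely the containment demanded by Lemma~\ref{loc}, which then gives the theorem.

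I do not expect a genuine obstacle: with the coarse-sheaf dictionary of Section~\ref{V} in hand, the argument is simply the concatenation of Lemma~\ref{loc}, Remark~\ref{sim}, and \cite[Lemma 4.7]{BF}. The one point that merits a word of care is that Lemma~\ref{loc} is phrased through an inclusion of subsheaves, whereas Remark~\ref{sim} controls curvilinear obstructions only as a set of classes; but $k$ is algebraically closed and the conclusion of Lemma~\ref{loc} concerns only the reduced substack $(C_X)_\red$, and its proof indeed reduces to comparing $k$-points, so the set-level statement is all that is needed.
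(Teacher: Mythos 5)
Your proposal is correct and follows essentially the same route as the paper: the paper's proof is exactly the application of Lemma~\ref{loc} to the subcones $C_X$ and $\bV(\Omega^1_X[-1])$ of $N_X$, combining \cite[Lemma 4.7]{BF} with Remark~\ref{sim}, and you have simply spelled out the coarse-sheaf identifications from Section~\ref{V} that the paper leaves implicit.
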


\begin{proof}

Consider the subcones $C_X$ and $\bV(\Omega_X^1[-1])$ of $N_X$.
By \cite[Lemma 4.7]{BF} and Remark \ref{sim},
we can apply Lemma \ref{loc} to conclude the proof.
\end{proof}

\begin{rmk}
Let  $\cX$ be an algebraic stack locally of finite type over $k$, and
$U\to \cX$ a smooth cover of $\cX$ by $U\in \sch$.
Then the normal cone
$C_\cX$ and normal sheaf $N_\cX$(\cite{AP}) are quotients of $C_U$ and $N_U$ by
the group stack $BT_{U/\cX}$ respectively. In particular,
we have a cartesian diagram
\[
\begin{tikzcd}
C_U \ar[r]\ar[d] & N_U\ar[d]\\
C_\cX\ar[r]        & N_\cX,
\end{tikzcd}
\]
and it follows from the theorem for $U$ that $C_\cX$ is 
supported in $\bV(\tau^{\ge 0}\bL_\cX[-1])$.
\end{rmk}

\begin{prop}\label{rel}
Let $f\colon X \to Y$ be a map in $\dm$ with $Y$ smooth.
Then $C_f$ is supported in $\bV(\{f^*{\Omega_Y^1} \to \Omega_X^1\}[-1])$.
\end{prop}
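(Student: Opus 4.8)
The plan is to deduce Proposition \ref{rel} from Theorem \ref{abs} by descent along the quotient presentations of $C_X$ and $N_X$, which are available because $Y$ is smooth.

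The first step is to line up three quotient presentations — of $\bV(\Omega_X^1[-1])$, of $C_X$ and of $N_X$. Since $Y$ is smooth, $\mathbf{L}f^*\Omega_Y^1\simeq f^*\Omega_Y^1$ is locally free, so the functor $\pd{(-)}$ sends it to $B(f^*T_Y)$. Applying $\pd{(-)}$, which carries a distinguished triangle to the corresponding fiber sequence of cone stacks (\cite[Proposition 2.6]{BF}), to the whole map of distinguished triangles displayed in the proof of the lemma preceding Theorem \ref{abs} yields a commutative square
\[
\begin{tikzcd}
\bV(\{f^*\Omega_Y^1\to\Omega_X^1\}[-1]) \ar[r,hook]\ar[d] & N_f \ar[d,"p"]\\
\bV(\Omega_X^1[-1]) \ar[r,hook] & N_X
\end{tikzcd}
\]
in which the horizontal maps are the closed embeddings of that lemma (and of its absolute form), and the columns are the first arrows of two fiber sequences $\bV(\{f^*\Omega_Y^1\to\Omega_X^1\}[-1])\to\bV(\Omega_X^1[-1])\to B(f^*T_Y)$ and $N_f\xrightarrow{p}N_X\to B(f^*T_Y)$ whose base stacks agree via the identity; here $p$ is the $f^*T_Y$-torsor of the presentation $N_X=[N_f/f^*T_Y]$ recalled in the introduction. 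Since the map of base stacks is an equivalence, the square is cartesian, so $\bV(\{f^*\Omega_Y^1\to\Omega_X^1\}[-1])=N_f\times_{N_X}\bV(\Omega_X^1[-1])$; and, reading the presentation $C_X=[C_f/f^*T_Y]$ from the introduction as a pullback of the torsor $p$, also $C_f=N_f\times_{N_X}C_X$.

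The second step transports Theorem \ref{abs} along $p$. For a closed substack $Z\hookrightarrow N_X$, base change along $p$ gives a closed substack $N_f\times_{N_X}Z\hookrightarrow N_f$ with underlying set $p^{-1}|Z|$, so $(N_f\times_{N_X}Z)_\red$ depends only on $Z_\red$; in particular, if $Z_\red\to N_X$ factors through a closed substack $W\hookrightarrow N_X$, then $(N_f\times_{N_X}Z)_\red\to N_f$ factors through $N_f\times_{N_X}W$. By Theorem \ref{abs}, $(C_X)_\red\hookrightarrow\bV(\Omega_X^1[-1])$ inside $N_X$; base changing along $p$ and reducing gives
\[
(C_f)_\red=(N_f\times_{N_X}C_X)_\red\hookrightarrow N_f\times_{N_X}\bV(\Omega_X^1[-1])=\bV(\{f^*\Omega_Y^1\to\Omega_X^1\}[-1])
\]
inside $N_f$, which is the assertion.

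The main obstacle I anticipate is in the first step: verifying that $\pd{(-)}$ applied to the full map of triangles from the lemma preceding Theorem \ref{abs} produces a genuinely commutative, cartesian square — so that $N_f\times_{N_X}\bV(\Omega_X^1[-1])$ is identified with $\bV(\{f^*\Omega_Y^1\to\Omega_X^1\}[-1])$ on the nose, rather than with some merely abstractly isomorphic $f^*T_Y$-torsor over $\bV(\Omega_X^1[-1])$. Once the presentations are aligned, the rest is routine base change. An alternative would be to prove a relative analogue of \cite[Lemma 4.7]{BF}, identifying the coarse sheaf of $\xi^*C_f$ with the curvilinear obstructions for deformations of $\xi\colon\spec k\to X$ over $Y$, and then to invoke Lemma \ref{loc} directly as in the proof of Theorem \ref{abs}; but establishing that identification appears no easier.
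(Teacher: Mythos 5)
Your proposal is correct and follows essentially the same route as the paper: both identify $C_f$ and $\bV(\{f^*\Omega_Y^1\to\Omega_X^1\}[-1])$ as the preimages of $C_X$ and $\bV(\Omega_X^1[-1])$ under the $f^*T_Y$-quotient map $N_f\to N_X$ (the paper phrases this as the two exact sequences of cones from \cite[Propositions 3.1 and 2.7]{BF} embedding into $f^*T_Y\to N_f\to N_X$), and then pull back the support statement of Theorem \ref{abs}. The cartesianness you worry about in your first step is exactly what the exactness of those cone sequences, with common kernel $f^*T_Y$, provides.
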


\begin{proof}
As $Y$ is smooth, we have an exact sequence of cones
\[
f^*T_Y \to C_f \to C_X
\] by \cite[Proposition 3.1]{BF}.

The distinguished triangle
\[
\Omega_X^1 \to  \{ f^*{\Omega_Y^1} \to \Omega_X^1\} \to f^*\Omega_Y^1[1]
\]
induces an exact sequence of cones
\[
f^*T_Y \to 
\bV(\{f^*{\Omega_Y^1} \to \Omega_X^1\}[-1])
\to \bV(\Omega_X^1[-1])
\] by\cite[Proposition 2.7]{BF}.

Note that both sequences embed into the exact sequence
\[
f^*T_Y \to N_f \to N_X
\] induced by $\bL_X \to \bL_f \to \mathbf{L}f^*\Omega_Y[1]$.
 As $C_X$ is supported in $\bV(\Omega_X^1[-1])$,
we see $C_f$ is supported in
$\bV(\{f^*{\Omega_Y^1} \to \Omega_X^1\}[-1])$

\end{proof}

\begin{rmk}
A more direct proof of 
Theorem \ref{abs}  and Proposition \ref{rel}
 is to show for a closed immersion $f\colon X \to Y$ in $\sch$ with $Y$ smooth, the normal cone $C_f$ is
supported in the abelian cone $\rC(K)$, where 
$K$ is
the kernel of the map $f^*\Omega_Y^1 \to \Omega_X^1$.
Unfortunately, we don't know how to prove this inside $\sch$.
\end{rmk}

\begin{cor}
Let $f\colon X \to Y$ be a map in $\dm$ with $Y$ smooth.
Let $\cO_X[1] \to \bL^{\ge -1}_f$ be a cosection inducing 
\[
\sigma\colon N_f \to \bA_k^1,
\]
If $\cO_X[1] \to \bL_f^{\ge -1}$ lifts to a cosection $\cO_X[1] \to \bL_X^{\ge -1}$, or equivalently
the cosection $\sigma$ descends to $N_X$,
then $C_f$ is supported in $\sigma^{-1}(0)$.
\end{cor}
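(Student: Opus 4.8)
The plan is to combine the relative cone-reduction statement (Proposition \ref{rel}) with the elementary vanishing of maps $\cO_X[1] \to \Omega_X^1[0]$, exactly as in the absolute case treated right after Theorem \ref{abs}. First I would observe that the cosection $\sigma\colon N_f \to \bA_k^1$ is, by definition, the map of cones induced by the given map $\cO_X[1] \to \bL_f^{\ge -1}$; its kernel $\sigma^{-1}(0)$ is the closed subcone $\bV(\mathrm{cone}(\cO_X[1] \to \bL_f^{\ge -1})[-1])$ sitting inside $N_f = \bV(\bL_f^{\ge -1}[-1])$, in the sense of the exact sequence of cones $\bA_k^1 \to N_f \to \bV$ coming from the triangle on truncated complexes. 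The hypothesis that $\sigma$ descends to $N_X$, i.e. that $\cO_X[1] \to \bL_f^{\ge -1}$ lifts along $\bL_X^{\ge -1} \to \bL_f^{\ge -1}$ (note $\bL_f^{\ge -1}$ is a quotient of $\bL_X^{\ge -1}$ because $Y$ is smooth, so $f^*\bL_Y = f^*\Omega_Y^1$ sits in degree $0$), is what lets us reduce the problem to a statement already living over $\Omega$'s rather than over $\bL_f$.

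Next I would recall from Proposition \ref{rel} that $C_f$ is supported in $\bV(\{f^*\Omega_Y^1 \to \Omega_X^1\}[-1])$. So it suffices to show that the map of cones $\sigma$ restricted to this subcone is zero, equivalently that the composite
\[
\cO_X[1] \to \bL_X^{\ge -1} \to \{f^*\Omega_Y^1 \to \Omega_X^1\}
\]
vanishes in $\qcoh(X)$. But this composite factors, by construction of the map $\bL_X^{\ge -1} \to \{f^*\Omega_Y^1 \to \Omega_X^1\}$ appearing in the proof of the lemma preceding Theorem \ref{abs}, through the truncation $\bL_X \to \Omega_X^1[0] = \tau^{\ge 0}\bL_X$; hence it is the image of a map $\cO_X[1] \to \Omega_X^1[0]$. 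Since $\cO_X[1]$ is in cohomological degree $-1$ and $\Omega_X^1[0]$ in degree $0$, every such map is zero, and therefore the lift of the cosection annihilates $\bV(\{f^*\Omega_Y^1 \to \Omega_X^1\}[-1])$, i.e. this whole subcone is contained in $\sigma^{-1}(0)$.

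Finally, since support is computed on the reduced substack and $(C_f)_\red \hookrightarrow \bV(\{f^*\Omega_Y^1 \to \Omega_X^1\}[-1])$ by Proposition \ref{rel}, composing the two closed immersions gives $(C_f)_\red \hookrightarrow \sigma^{-1}(0)$, which is the assertion. The only point requiring a little care — and the step I expect to be the main obstacle to write cleanly — is the compatibility bookkeeping: one must check that the descended cosection $N_f \to \bA_k^1$ and the closed immersion $\bV(\{f^*\Omega_Y^1 \to \Omega_X^1\}[-1]) \hookrightarrow N_f$ of Proposition \ref{rel} are both induced, functorially via the $\bV$/$h^1/h^0$ construction, by the same commutative triangle of complexes $\cO_X[1] \leftarrow \bL_X^{\ge -1} \to \{f^*\Omega_Y^1 \to \Omega_X^1\}$ over $\bL_f^{\ge -1}$, so that "the composite of complexes is zero" really does translate into "the subcone lies in the kernel". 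Granting the functoriality of $\bV(-[-1])$ already used repeatedly in Section 3, this is immediate.
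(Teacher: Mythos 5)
Your proposal is correct and follows essentially the same route as the paper: both reduce via Proposition \ref{rel} to showing that the composite $\cO_X[1] \to \bL_f^{\ge -1} \to \{f^*\Omega_Y^1 \to \Omega_X^1\}$ vanishes, and both see this by using the assumed lift to $\bL_X^{\ge -1}$ to factor it through a map $\cO_X[1] \to \Omega_X^1[0]$, which is zero for degree reasons. The "compatibility bookkeeping" you flag is exactly the commutative square relating $\bL_X^{\ge -1} \to \bL_f^{\ge -1}$ with $\Omega_X^1 \to \{f^*\Omega_Y^1 \to \Omega_X^1\}$ that the paper records in its diagram, so nothing further is needed.
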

\begin{proof}
Note that the map $\cO_X[1] \to \bL_f^{\ge -1} \to \{ f^*{\Omega_Y^1} \to \Omega_X^1\}$ is zero, 
as it factors through $\cO_X[1] \to \bL_X^{\ge -1} \to \Omega_X^1$ in the diagram
\[
\begin{tikzcd}
&\cO_X[1]\ar[dl]\ar[d]\\
\bL_X^{\ge -1} \ar[r] \ar[d]
	& \bL_f^{\ge -1} \ar[d]\\
\Omega^1_X \ar[r]
	& \{ f^*{\Omega_Y^1} \to \Omega_X^1\}.
\end{tikzcd}
\]
\end{proof}


\begin{rmk}
The strategy to relate $C_f$ to $C_X$  in Proposition \ref{rel}
and the condition that $\sigma$ descends in the corollary appeared in
\cite[Section 3.5]{MPT} for the case of stable pairs moduli spaces.
\end{rmk}

\section{semiregularity map for sheaves}

\subsection{}
Let $M$ be a DM stack in $\dm$ with an obstruction theory
$F \to \bL_M$, i.e., a map in $\coh(M)$ whose cone lies in $\coh^{\le -2}(M)$.
Let $\rho\colon G \to F$ be a map in $\coh(M)$,
it functions as a global semiregularity map.
For any map $f\colon S \to M$ and square zero extension $\bL_S \to J[1]$
we obtain the map
\[
\ext^1_S(f^*F, J) \xrightarrow{f^*\rho\circ} \ext^1_S(f^*G, J),
\] which we view as a semiregularity type map.

\subsection{}
Now we move on to the semiregularity map for sheaves in \cite{BuF}, we will globalize the map as above. 
For a discussion of works related to semiregular maps, see \cite[Chapter 3]{Le}.

Let $\pi\colon X \to B$ be a flat family of smooth projective varieties over an algebraically closed  
field $k$ with $\Char k=0$, and 
$\mu\colon M \to B$ a moduli 
scheme of stable sheaves on the fibers of $\pi$. (See e.g., \cite[Chapter 4]{HL}.)
Consider the cartesian digram,
\[
\begin{tikzcd}
M\times_B X \ar[r,"q"]\ar[d,"p"] & X \ar[d,"\pi"] \\
M \ar[r,"\mu"]    & B,
\end{tikzcd}
\] where we used $p,q$ to denote projection maps.

We have a map 
\begin{equation} \label{obs}
(p_*\Hom(E,E)[1])^\vee  \to \bL_{\mu}
\end{equation}
obtained from 
the Atiyah class of a twisted universal sheaf $E$ 
on $M\times_BX$ relative to $q$.
(See e.g., \cite[Section 1.4]{Ku}.) And an obstruction theory can be constructed from this map (\cite[Section 4]{HT}).

Next, we will construct the map in $\coh(M)$ with target $(p_*\Hom(E,E)[1])^\vee$,
 or dually, a map with source $p_*\Hom(E,E)[1]$, that induces the local semiregularity map
up to $\pm$ signs.
For each integer $n\ge 0$, 
consider the composition 
\[
 E \otimes \wedge^n \bL_p[n]
\xrightarrow{\at(E)\otimes \id} E\otimes \bL_p[1]\otimes \wedge^n \bL_p[n]
\xrightarrow{\id \otimes \wedge} E\otimes \wedge^{n+1} \bL_p[n+1],
\]
where 
$
\at(E)\colon E \to E\otimes \bL_p[1]
$ denotes the Atiyah class of $E$ relative to $p$.
It induces a map
\[
\at(E)\circ \colon \Hom(E, E \otimes \wedge^n \bL_p[n]) \to \Hom(E, E \otimes \wedge^{n+1} \bL_p[n+1]).
\]
These maps determine
\[
(\at(E)\circ)^n[1] \colon \Hom(E, E)[1] \to \Hom(E, E\otimes \wedge^n \bL_p[n])[1].
\]
Composing with the trace map
\[
\tr\colon \Hom(E, E \otimes \wedge^n \bL_p[n])[1] \simeq \Hom(E,E)\otimes \wedge^n \bL_p[n+1]
\xrightarrow{\tr_E \otimes \id }  \wedge^n \bL_p[n+1]
\] and pushforward alone p, 
we obtain the map
\begin{equation} \label{sr}
p_*\Hom(E,E)[1] \to p_* \wedge^n \bL_p[n+1].
\end{equation}
The product of these maps over $n$
\[
p_*\Hom(E,E)[1] \to \prod_{n\ge 1} p_* \wedge^n \bL_p[n+1]
\] in $\coh(M)$
 is dual to the desired global semiregularity map up to $\pm$ signs on each factor indexed by $n$.
 
 As 
$
\bL_p\simeq  q^*\bL_\pi\simeq q^*\Omega_\pi^1
$,
we see
\[
p_* \wedge^n \bL_p[n+1] \simeq p_* q^*\Omega^n_\pi[n+1]
\simeq 
\mu^*\mathbf{R}\pi_*\Omega_\pi^n[n+1].
\]
Note that 
\[
\mathbf{R}\pi_*\Omega_\pi^n
\simeq
\bigoplus_{j \ge 0} \mathbf{R}^j\pi_*\Omega_\pi^n[-j]
\] is a direct sum of shifted 
locally free coherent sheaves by \cite[Theorem 6.1, Proposition 5.5(i)]{De}.

Vanishing of simple extensions under the semiregularity map was proved
in \cite{BuF}, an alternative proof for smooth projective families
follows from the lemma below.

\begin{lemma}
Let $M$ be a DM stack, $m$ an integer, $\cG$ a locally free coherent sheaf on $M$ with a 
map $\phi \colon \cG[m] \to \bL _M$.
Let 
$f\colon \spec A \to M$ be a map.
If $m \ne 0$,
then the composition 
\[
f^*\cG[m] \xrightarrow{f^*\phi} f^*\bL_M \to \bL_A \to \Omega_A^1
\] is zero.
It follows that for any $A$-module $J$ and map $e\colon \Omega_A^1 \to J[1]$ in $\qcoh(\spec A)$
the composition 
\[
f^*\cG[m] \xrightarrow{f^*\phi} f^*\bL_M \to \bL_A \to \Omega_A^1 \xrightarrow{e} J[1]
\] is zero.
\end{lemma}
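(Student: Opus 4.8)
The plan is to reduce the statement to a degree count for morphisms in the derived category of $A$-modules. First I would note that, $\cG$ being locally free and hence flat, $\mathbf{L}f^*\cG\simeq f^*\cG$, and that $f^*\cG$ is a finitely generated projective $A$-module, regarded as a complex concentrated in cohomological degree $0$; consequently $f^*\cG[m]$ is concentrated in degree $-m$, while $\Omega_A^1$ is a module in degree $0$. Regardless of what the intermediate maps do, the composition in the statement is then simply a morphism $f^*\cG[m]\to\Omega_A^1$ in $\qcoh(\spec A)\simeq\mathrm{D}(A)$, i.e.\ an element of
\[
\mathrm{Hom}_{\mathrm{D}(A)}\bigl(f^*\cG[m],\,\Omega_A^1\bigr)\;\simeq\;\ext^{-m}_A\bigl(f^*\cG,\,\Omega_A^1\bigr).
\]

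The next step is to kill this Ext-group when $m\neq 0$. If $m>0$ the exponent $-m$ is negative, and $\ext^i_A$ between two modules vanishes for $i<0$. If $m<0$ the exponent $-m$ is positive, but $f^*\cG$ is projective over $A$, so $\ext^i_A(f^*\cG,-)=0$ for every $i>0$. In either case the composition is the zero morphism, which is the first assertion; post-composing with $e\colon\Omega_A^1\to J[1]$ then gives the second for free.

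I do not expect a genuine obstacle here — the argument is bookkeeping — but it is worth stressing the point on which it turns: one must pass to the \emph{affine} scheme $\spec A$ before invoking Ext-vanishing. On $M$ itself the companion map $\cG[m]\to\Omega^1_M[0]$, obtained by composing $\phi$ with $\bL_M\to\tau^{\ge 0}\bL_M$, need not vanish, since $\ext^{-m}_M(\cG,\Omega^1_M)\simeq H^{-m}(M,\cG^\vee\otimes\Omega^1_M)$ absorbs the cohomology of $M$ and can be nonzero in positive degree (for instance when $M$ is projective). Affineness of $\spec A$ is precisely what removes the global-cohomology contribution and makes the degree count decisive. For $m=1$ this recovers the global form of \cite[Proposition 6.13(1)]{BuF} mentioned in the introduction.
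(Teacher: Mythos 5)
Your argument is correct and is essentially the paper's own proof: both reduce the claim to the vanishing of $\ext^{-m}_A(f^*\cG,\Omega_A^1)$ for $m\neq 0$, using negativity of the degree for $m>0$ and projectivity of the finitely generated locally free module $f^*\cG$ for $m<0$. The only difference is cosmetic — the paper additionally records the vanishing $\ext^1_A(f^*\cG,J)=0$ relevant to the $m=0$ case of the composition into $J[1]$, which your statement-as-given does not require.
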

\begin{proof}
As $f^*\cG$ is a locally free coherent sheaf on $\spec A$, 
\[
\hom_{\qcoh(\spec A)}(f^*\cG[m], \Omega_A^1)=\ext^{-m}_A(f^*\cG, \Omega_A^1)=0
\] if $m\ne 0$.  For $m=0$,
\[
\ext^{-m}_A(f^*\cG, J[1])=\ext^{1}_A(f^*\cG, J)=0.
\]
\end{proof}

\end{document}